\theoremstyle{plain}
\newtheorem{theorem}{Theorem}[section]
\newtheorem{definition}[theorem]{Definition}
\newtheorem{lemma}[theorem]{Lemma}
\numberwithin{equation}{section}
\begin{document}

\newcommand{\coloneqq}{\mathrel{\raise.095ex\hbox{:}\mkern-4.2mu=}}
\def\ecolon{\mathrel{\mathop:}=}

\title[Uniqueness for the two-phase Stefan problem] {Uniqueness for solutions of the two-phase Stefan 
problem with signed measures as data}

\author{Marianne K. Korten}
\address{Department of Mathematics \\
Kansas State University \\
Manhattan, KS 66506}
\email{marianne@math.ksu.edu}

\author{Charles N. Moore}
\email{cnmoore@math.ksu.edu}

\subjclass{35K65, 35K55, 80A22}
\keywords{uniqueness, Stefan problem, degenerate parabolic equation, signed measure data}

\begin{abstract}
We consider the two-phase Stefan problem $u_t=\Delta\alpha(u)$ where 
$\alpha(u) =u+1$ for $u<-1$, $\alpha(u) =0$ for $-1 \leq u \leq 1$,
and $\alpha(u)=u-1$ for $u > 1$.  We show uniqueness of solutions which have signed measures as initial data,
that is, we show that if the 
difference of two 
solutions $u$ and 
$v$ defined 
on $\mathbb R^n \times (0,T)$ vanishes in a weak sense as $t \to 0$ then
$u=v$ a.e.  
\end{abstract}

\maketitle

\section{Introduction}\label{S:1.}
In this  paper we show uniqueness theorems for distributional
solutions to the degenerate parabolic equation
\begin{equation}\label{classical}
\frac{\partial u}{\partial t} =\Delta \alpha(u)
\end{equation}
in the domain $\mathbb R^n \times (0,T)$, for some $T>0$.
Here $\alpha(u)=0$ if $-1 \leq u \leq 1$, $\alpha(u)=u-1$ for $u>1$,
and $\alpha(u)=u+1$ for $u<-1.$

Equation \eqref{classical} is known as the two-phase Stefan problem, which describes the
flow of heat within a substance which can be in a liquid phase or a solid phase, and for which
there is a latent heat to initiate phase change. This allows for a ``mushy zone", that is, a 
region which is between the liquid and solid phases.
In this model, $u$ represents the enthalpy
and $\alpha(u)$ the temperature.  

Throughout, we will consider distributional solutions to \eqref{classical}, that is, we consider  $u \in 
L^1_{loc}(\mathbb{R}^n\times (0,T))$  which satisfy
\begin{equation}\label{distsolns}
\int_0^T \int_{\mathbb R^n} \alpha(u) \Delta \varphi + u \varphi_t dx \, dt =0
\end{equation}
for every $\varphi \in C^{\infty}$ with compact support in $\mathbb R^n 
\times (0,T).$

For non-negative solutions of \eqref{distsolns}, a collection of techniques is 
available. It is possible to identify the growth at infinity
and to show
existence and uniqueness of solutions to the Cauchy problem in the
optimal class (of growth at infinity) for measure data. See Korten 
\cite{Ko} and Andreucci and Korten  \cite{AKo}. In this present paper,  
the fact that we are working with signed solutions
complicates matters and we will need to devise a different strategy.

Previous work of the authors (Korten and Moore \cite{KoM1}) 
provides a solution for the Cauchy problem for equation \eqref{distsolns}.

\begin{definition}  Given a Radon measure $\mu$ with $\int_{\mathbb R^n} \exp(-c|x|^2) d|\mu|< \infty$ for some $c>0,$
we say that $u$ which is integrable on bounded subsets of $\mathbb R^n \times (0,T)$
satisfies the Cauchy problem with initial data $\mu$ for the two-phase Stefan problem in the sense of conservation laws if 
\begin{equation*}
\int_0^T \int_{\mathbb R^n} \alpha(u)\Delta\varphi + u \varphi_t dx dt + \int_{\mathbb R^n} \varphi(x,0) d\mu=0
\end{equation*}
for every $\varphi \in C^{\infty}(\mathbb R^n \times (-\infty,T))$ with compact support.
\end{definition}

\begin{theorem}\cite{KoM1} Suppose $\mu$ is a Radon measure which satisfies $\int_{\mathbb R^n} \exp(-c|x|^2) d|\mu| < \infty$
for some $c>0.$ Then there exists a solution of the Cauchy problem with initial data $\mu$ for the two-phase Stefan problem in the sense of conservation laws on $\mathbb R^n \times (0,T),$ where 
$T=\frac{1}{4c}.$
\end{theorem}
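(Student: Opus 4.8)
The plan is to obtain the solution as a limit of solutions with smooth, compactly supported data, bounding it above and below by one-phase Stefan solutions, for which the theory of \cite{Ko} and \cite{AKo} is available. Write $\mu=\mu^{+}-\mu^{-}$ for the Jordan decomposition and let $\mu_k=\rho_{1/k}*(\zeta_k\mu)$, where $\rho_\varepsilon$ is a standard mollifier and $\zeta_k$ a smooth cutoff with $\zeta_k\equiv 1$ on $B_k$ and $\operatorname{supp}\zeta_k\subset B_{k+1}$; then $\mu_k\in C_c^{\infty}(\mathbb R^n)$, $\mu_k\rightharpoonup\mu$ weakly-$*$, and $\sup_k\int_{\mathbb R^n}\exp(-c'|x|^{2})\,|\mu_k|\,dx<\infty$ for every $c'>c$. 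For bounded, compactly supported data the existence of a bounded distributional solution $u_k$ of the Cauchy problem with datum $\mu_k$ on $\mathbb R^n\times(0,T)$ is classical: regularize $\alpha$ to a strictly increasing smooth $\alpha_\varepsilon$, solve the resulting uniformly parabolic problems on $B_R\times(0,T)$ with datum $\mu_k$ and, say, $\alpha_\varepsilon(u)=0$ on the lateral boundary, and let $R\to\infty$ and $\varepsilon\to 0$ using the maximum principle, the $L^{1}$-contraction, and monotone limits.

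The heart of the argument is a family of bounds on $u_k$ uniform in $k$ on $\mathbb R^n\times(0,\tau]$ for each $\tau<1/(4c)$. Since $u\mapsto -u$ together with $\alpha(-s)=-\alpha(s)$ leaves \eqref{classical} invariant, and since for nonnegative data the equation reduces to the one-phase Stefan problem $v_t=\Delta(v-1)^{+}$ of \cite{Ko}, \cite{AKo}, the comparison principle yields $-\,\underline v_k(x,t)\le u_k(x,t)\le\overline v_k(x,t)$, where $\overline v_k,\underline v_k\ge 0$ are the one-phase solutions with data $\mu_k^{+}$ and $\mu_k^{-}$. Because $\overline v_k$ solves $v_t=\Delta(v-1)^{+}$, the function $(\overline v_k-1)^{+}$ is a nonnegative subsolution of the heat equation with initial datum $\le\mu_k^{+}$, so $0\le\overline v_k(x,t)\le 1+(\Gamma_t*\mu_k^{+})(x)$ with $\Gamma_t$ the Gauss--Weierstrass kernel, and likewise for $\underline v_k$; hence $|u_k(x,t)|\le 1+(\Gamma_t*|\mu_k|)(x)$. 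Choosing $c'\in(c,1/(4\tau))$ makes the right side finite and bounded, uniformly in $k$ and in $t\in(0,\tau]$ on compact sets, since $\exp(-|x-y|^{2}/4t)$ is then dominated by a constant times $\exp(-c'|y|^{2})$; this is exactly where the threshold $T=1/(4c)$ enters. Combining these pointwise bounds with the $L^{1}$-contraction and order preservation, one obtains: a uniform bound for $u_k$ in $L^{\infty}\big((0,\tau);L^{1}(B_R)\big)$ and in $L^{\infty}_{loc}$ away from $t=0$; a uniform bound for $\alpha(u_k)$ in $L^{2}_{loc}\big((0,T);H^{1}_{loc}(\mathbb R^n)\big)$, obtained by testing the equation for $u_k$ against $\alpha(u_k)$ times a space--time cutoff; and consequently a uniform bound for $\partial_t u_k=\Delta\alpha(u_k)$ in $L^{2}_{loc}\big((0,T);H^{-1}_{loc}(\mathbb R^n)\big)$.

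Passing to a subsequence (diagonal over $\tau\uparrow 1/(4c)$), one gets $u_k\rightharpoonup u$ weakly in $L^{1}_{loc}(\mathbb R^n\times(0,T))$ --- equi-integrability, hence weak $L^{1}$ compactness by Dunford--Pettis, follows from the $L^{\infty}_{loc}$ bound away from $t=0$ together with the vanishingly small mass of the slab near $t=0$ --- and $\alpha(u_k)\rightharpoonup\chi$ weakly in $L^{2}_{loc}\big((0,T);H^{1}_{loc}\big)$. The linear terms pass to the limit against the fixed test function, and $\int_{\mathbb R^n}\varphi(x,0)\,d\mu_k\to\int_{\mathbb R^n}\varphi(x,0)\,d\mu$ because $\mu_k\rightharpoonup\mu$ and $\varphi(\cdot,0)\in C_c(\mathbb R^n)$, so $u$ and $\chi$ satisfy $\int_0^T\!\!\int_{\mathbb R^n}\chi\,\Delta\varphi+u\,\varphi_t\,dx\,dt+\int_{\mathbb R^n}\varphi(x,0)\,d\mu=0$. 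It then remains to identify $\chi=\alpha(u)$, which I would do by a monotonicity argument of Minty/Alt--Luckhaus type, combining the energy identity produced by testing against $\alpha(u_k)$, the weak lower semicontinuity of $\iint|\nabla\alpha(u_k)|^{2}$, and the monotonicity of $\alpha$, to force $(u,\chi)$ onto the graph of $\alpha$ a.e. This exhibits $u$ as a solution of the Cauchy problem with datum $\mu$ on $\mathbb R^n\times(0,1/(4c))$.

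I expect two points to be the real obstacles. The first is making the one-phase comparison bound uniform up to the optimal time $1/(4c)$: this rests on the subcaloricity of $(\overline v_k-1)^{+}$, on the one-phase results of \cite{Ko} and \cite{AKo}, and on a comparison principle valid for the (possibly non-smooth) approximate solutions, each of which needs to be justified carefully within the weak framework. The second is the identification $\chi=\alpha(u)$: the degeneracy of $\alpha$ on $[-1,1]$ precludes any uniform parabolic regularity, and in the mushy region $\{\alpha(u_k)=0\}=\{|u_k|\le 1\}$ the enthalpy is not a function of the temperature, so strong $L^{1}_{loc}$ convergence of $u_k$ does not come for free and the argument must lean entirely on the energy and monotonicity structure above.
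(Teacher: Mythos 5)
This theorem is stated in the present paper only as a citation of the authors' earlier work \cite{KoM1}; no proof of it appears in this manuscript. There is therefore no in-paper argument to compare your proposal against, and I can only assess the proposal on its own terms.

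As an outline, your strategy is the standard and essentially correct one for measure data with Gaussian growth: mollify $\mu$, solve for smooth compactly supported data by regularizing $\alpha$, obtain bounds uniform up to any $\tau<1/(4c)$ by comparison with the heat extension of $|\mu|$, and pass to the limit using energy estimates and a Minty/Alt--Luckhaus identification of the nonlinear term. Two comments. First, the detour through the one-phase theory of \cite{Ko}, \cite{AKo} and the subcaloricity of $(\overline v_k-1)^{+}$ is unnecessary: if $h\ge 0$ is caloric then $\alpha(1+h)=h$, so $1+h$ is an \emph{exact} solution of \eqref{classical}; hence $1+\Gamma_t*\mu_k^{+}$ and $-1-\Gamma_t*\mu_k^{-}$ serve directly as barriers once one has a comparison principle for the approximating problems, and the threshold $T=\frac{1}{4c}$ falls out of the finiteness of $\Gamma_t*|\mu|$ for $t<\frac{1}{4c}$ exactly as you say. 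Second, the two obstacles you flag at the end are the genuine ones, and as written they remain gaps rather than proofs: the comparison principle must be established for the regularized problems and shown to survive the limits $R\to\infty$ and $\varepsilon\to 0$ (it is not automatic for merely distributional solutions), and the identification $\chi=\alpha(u)$ cannot be left at the level of ``a monotonicity argument of Minty type'' --- weak $L^1_{loc}$ convergence of $u_k$ together with weak convergence of $\alpha(u_k)$ does not pass through the nonlinearity, and closing the Minty inequality requires the precise energy identity, including control of the term $\int B(u_k)$ with $B'=\alpha$, down to $t=0$ where only measure data are available. Your sketch names the right tools, but a complete proof must carry out both of these steps; neither can be checked against this paper, whose contribution is the uniqueness statement, not the existence result quoted here.
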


In \cite{KoM1} it is shown that a solution in the sense of conservation laws is a solution in the sense of distributions.  Consequently, we have:

\begin{theorem}\cite{KoM1}  Suppose $\mu$ is a Radon measure satisfying $\int_{\mathbb R^n} \exp(-c|x|^2) d|\mu| < \infty$
for some $c>0.$ Then there exists a solution of \eqref{distsolns} on  $\mathbb R^n \times (0,T),$ $T=\frac{1}{4c},$ 
which has initial value $\mu$ in the sense 
that $\lim_{t \to 0} \int_{\mathbb R^n} u(x,t) \psi(x) dx =
\int_{\mathbb R^n}  \psi(x) d\mu $ for every function $\psi \in C^{\infty}(\mathbb R^n)$ of compact support.
\end{theorem}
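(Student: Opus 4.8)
The plan is to read the assertion off from the existence theorem for solutions in the sense of conservation laws, together with the fact recalled just above that such a solution satisfies \eqref{distsolns}. The only real work is to turn the conservation-law identity --- which, unlike \eqref{distsolns}, admits test functions that are nonzero at $t=0$ --- into the stated weak convergence of $u(\cdot,t)$ to $\mu$.

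So let $u$ be the solution furnished by the previous theorem: it is integrable on bounded subsets of $\mathbb{R}^n\times(0,T)$ with $T=\tfrac1{4c}$, it satisfies \eqref{distsolns}, and
\[
\int_0^T\!\!\int_{\mathbb{R}^n}\alpha(u)\Delta\varphi+u\,\varphi_t\,dx\,dt+\int_{\mathbb{R}^n}\varphi(x,0)\,d\mu=0
\]
for every $\varphi\in C^\infty(\mathbb{R}^n\times(-\infty,T))$ of compact support. Fix $\psi\in C^\infty_c(\mathbb{R}^n)$, put $K=\operatorname{supp}\psi$, and set $g(t)=\int_{\mathbb{R}^n}u(x,t)\psi(x)\,dx$ and $h(t)=\int_{\mathbb{R}^n}\alpha(u(x,t))\Delta\psi(x)\,dx$. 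Since $u$ is integrable on $K\times(0,T)$ and $|\alpha(u)|\le|u|$, both $g$ and $h$ are defined for a.e.\ $t$ and lie in $L^1(0,T)$. Taking $\varphi(x,t)=\psi(x)\eta(t)$ with $\eta\in C^\infty_c((0,T))$ in \eqref{distsolns} gives $\int_0^T g\,\eta'\,dt=-\int_0^T h\,\eta\,dt$, i.e.\ $g'=h$ in $\mathcal D'(0,T)$; since $h\in L^1(0,T)$, the function $g$ agrees a.e.\ with an absolutely continuous function on $[0,T]$, and in particular $\ell:=\lim_{t\to0^+}g(t)$ exists.

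To identify $\ell$, fix, for small $\varepsilon>0$, a cutoff $\eta_\varepsilon\in C^\infty_c((-\infty,T))$ with $0\le\eta_\varepsilon\le1$, $\eta_\varepsilon=1$ on $[-1,\varepsilon]$, and $\eta_\varepsilon$ nonincreasing on $[\varepsilon,\infty)$ with $\eta_\varepsilon=0$ on $[2\varepsilon,\infty)$, and put $\varphi(x,t)=\psi(x)\eta_\varepsilon(t)$ into the conservation-law identity. Its first term is dominated by $\|\Delta\psi\|_\infty\int_0^{2\varepsilon}\!\int_K|u|\,dx\,dt$, hence tends to $0$ as $\varepsilon\to0$ by absolute continuity of the integral; its second term equals $\int_\varepsilon^{2\varepsilon}g(t)\,\eta_\varepsilon'(t)\,dt$, which tends to $-\ell$ because $\int_\varepsilon^{2\varepsilon}\eta_\varepsilon'=-1$, $\int_\varepsilon^{2\varepsilon}|\eta_\varepsilon'|=1$ and $g(t)\to\ell$; and its third term is $\int_{\mathbb{R}^n}\psi\,d\mu$ since $\varphi(\cdot,0)=\psi$. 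Letting $\varepsilon\to0$ yields $-\ell+\int_{\mathbb{R}^n}\psi\,d\mu=0$, i.e.\ $\lim_{t\to0}\int_{\mathbb{R}^n}u(x,t)\psi(x)\,dx=\int_{\mathbb{R}^n}\psi\,d\mu$, which is the claim.

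I do not expect a genuine obstacle here: the argument is soft --- Fubini, absolute continuity of the Lebesgue integral, and a one-dimensional Sobolev embedding --- and all the substance (the construction of $u$ and its being a distributional solution) is contained in the cited results from \cite{KoM1}. The one step worth a moment's care is the passage from ``$t\mapsto\int_{\mathbb{R}^n}u(x,t)\psi\,dx$ has the correct limit along some sequence $t\to0$'' to ``it has the correct limit'', which is exactly what the identity $g'=h\in L^1$ provides.
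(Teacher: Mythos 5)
Your proposal is correct and takes the same route the paper indicates: the paper states this theorem as an immediate consequence of the preceding conservation-law existence theorem (delegating the details to \cite{KoM1}), and you supply exactly that deduction. Your two steps --- using \eqref{distsolns} with $\varphi=\psi(x)\eta(t)$ to show $t\mapsto\int u(x,t)\psi\,dx$ has an $L^1$ distributional derivative, hence an absolutely continuous representative with a limit at $t=0^+$, and then identifying that limit as $\int\psi\,d\mu$ via a cutoff in the conservation-law identity --- are the standard argument and are carried out correctly.
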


Note that a solution in the sense of conservation laws is integrable on bounded subsets of $\mathbb R^n \times (0,T),$
whereas a distributional solution is required only to be integrable on compact subsets of  $\mathbb R^n \times (0,T);$
for $u$ which are integrable on bounded subsets of  $\mathbb R^n \times (0,T),$ the two notions of solution are equivalent.

In other previous work, the authors have discussed regularity of solutions.  In Korten and Moore \cite{KoM2}
the authors show intrinsic energy estimates for
signed local solutions to \eqref{distsolns}. These estimates do not involve initial or boundary
data. Subcaloric estimates are then used to show that if
$u \in L^2_{loc}$ is a solution of the two-phase Stefan problem, then $\alpha(u)$ is locally
bounded.  It then follows from a theorem of Caffarelli and Evans \cite{CE} (or similar results
by Sacks \cite{Sa}, Ziemer \cite{Z}, or DiBenedetto \cite{D}) that $\alpha(u)$ is continuous.

Our purpose in this paper is to show the following uniqueness theorem.

\begin{theorem} \label{maintheorem}  
Suppose $u$ and $v$ are  solutions of
\eqref{distsolns}   on $\mathbb R^n \times (0,T)$ which
belong to $L^{\infty}(\mathbb R^n \times (\varepsilon, T))$ for 
every $\varepsilon >0,$ and which satisfy 
\begin{equation*} 
\int_{\mathbb R^n} \int_0^T (|u(x,t)| + |v(x,t)|) e^{-c|x|^2} dt dx < \infty, 
\end{equation*}
for some $c>0.$  
If  for every $\varphi \in C^{\infty}(\mathbb R^n)$  of compact 
support we have
\begin{equation*}
\lim_{t \to 0} \int_{\mathbb R^n} (u(x,t)-v(x,t)) \varphi(x) dx =0,
\end{equation*}
then $u(x,t)=v(x,t)$ a.e. on $\mathbb R^n \times (0,T).$
\end{theorem}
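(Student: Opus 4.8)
The plan is to use the classical duality (adjoint) method, paying attention to three features of the problem: the degeneracy of $\alpha$ (which is constant on $[-1,1]$), the fact that the common initial datum is attained only in a weak sense, and the unbounded domain together with the Gaussian growth control. Put $w=u-v$ and, where $u\neq v$, set $a(x,t)=\big(\alpha(u)-\alpha(v)\big)/(u-v)$, with $a=0$ where $u=v$; since $\alpha$ is nondecreasing and $1$-Lipschitz, $0\le a\le1$. Subtracting \eqref{distsolns} for $u$ and for $v$ gives $\int_0^T\!\int_{\mathbb R^n} w\,(a\Delta\varphi+\varphi_t)\,dx\,dt=0$ for all $\varphi\in C_c^\infty(\mathbb R^n\times(0,T))$. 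Using the hypothesis $\int_{\mathbb R^n}w(x,t)\psi(x)\,dx\to0$ as $t\to0$ for $\psi\in C_c^\infty(\mathbb R^n)$, together with $\int_0^T\!\int_{B_\rho}|w|<\infty$ for every $\rho$ (a consequence of the growth bound), I would first upgrade this identity so that it holds for every smooth $\varphi$ that is compactly supported in $x$ and vanishes for $t$ near $T$, but not necessarily near $t=0$: multiplying by a temporal cutoff $\zeta_\delta$ vanishing near $t=0$ and letting $\delta\to0$, the boundary term at $t=0$ drops out because the weak trace of $w$ vanishes (one reduces to the fixed test function $\psi=\varphi(\cdot,0)$ using $|\varphi(\cdot,t)-\varphi(\cdot,0)|\le t\|\varphi_t\|_\infty$).

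Fix $\sigma>0$ and $g\in C_c^\infty(\mathbb R^n\times(0,T-\sigma))$; it suffices to show $\int_0^T\!\int_{\mathbb R^n}wg=0$, and by linearity we may assume $g\le0$. For each $m$, choose a smooth $a_m$ with $1/m\le a_m\le1$ and, using $0\le a\le1$ and $\int e^{-c|x|^2}\,dx<\infty$, with $\int_0^T\!\int_{\mathbb R^n}\frac{(a_m-a)^2}{a_m}e^{-c|x|^2}\,dx\,dt\to0$ (for instance $a_m$ a mollification of $a+\tfrac1m$ with mollification scale chosen fast enough). Solve the backward problem $\partial_t\varphi_m+a_m\Delta\varphi_m=g$ on $\mathbb R^n\times(0,T)$ with $\varphi_m(\cdot,T)=0$; for fixed $m$ this is a nondegenerate linear parabolic equation with smooth coefficients, so $\varphi_m$ is smooth, and the maximum principle gives $0\le\varphi_m\le T\|g\|_\infty$ with $\varphi_m\equiv0$ on $\mathbb R^n\times(T-\sigma,T)$. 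The heart of the proof is a family of a priori estimates for $\varphi_m$ that are uniform in $m$: an energy bound $\int_0^T\!\int_{\mathbb R^n}a_m|\Delta\varphi_m|^2\le C$ (multiply the equation by $-\Delta\varphi_m$, integrate, and use $\int_0^T\!\int g\,\Delta\varphi_m=\int_0^T\!\int(\Delta g)\varphi_m\le T\|g\|_\infty\|\Delta g\|_{L^1}$ together with the favorable sign of $-\tfrac12\int|\nabla\varphi_m(\cdot,0)|^2$); Gaussian decay $|\varphi_m(x,t)|\le Ce^{-\beta|x|^2}$ for $|x|$ large with $\beta>c$ (for $T$ small relative to $1/c$, via a barrier built from the heat kernel and using only $a_m\le1$); and enough control on $\nabla\varphi_m$ and on $a_m\Delta\varphi_m=g-\partial_t\varphi_m$ at infinity to upgrade the energy bound to $\int_0^T\!\int_{\mathbb R^n}a_m|\Delta\varphi_m|^2e^{c|x|^2}\le C$ uniformly in $m$. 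Proving these estimates uniformly in $m$, on the unbounded domain, with $a_m$ losing ellipticity, is what I expect to be the main obstacle.

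Granting the estimates, test the upgraded identity with $\varphi=\varphi_m\eta_R$, $\eta_R$ a spatial cutoff equal to $1$ on $B_R$; expanding the derivatives, the commutator terms are supported in $\{R<|x|<2R\}$ and are dominated by $C\int_0^T\!\int_{R<|x|<2R}|w|\,e^{-c|x|^2}$, which vanishes as $R\to\infty$ by the decay of $\varphi_m$ and its first derivatives and by the growth hypothesis on $w$. Letting $R\to\infty$ yields $\int_0^T\!\int_{\mathbb R^n}wg=\int_0^T\!\int_{\mathbb R^n}w\,(a_m-a)\Delta\varphi_m$. Writing $(a_m-a)\Delta\varphi_m=\dfrac{a_m-a}{\sqrt{a_m}}\cdot\sqrt{a_m}\,\Delta\varphi_m$ and splitting the $t$-integral at $\varepsilon$: on $(\varepsilon,T)$, where $|w|\le\|w\|_{L^\infty(\varepsilon,T)}$, Cauchy--Schwarz with weight $e^{-c|x|^2}$ and $\frac{(a_m-a)^2}{a_m}\le m(a_m-a)^2$ bound the contribution by
\[
C_\varepsilon\Big(\int_0^T\!\!\int_{\mathbb R^n}\tfrac{(a_m-a)^2}{a_m}e^{-c|x|^2}\Big)^{1/2}\Big(\int_0^T\!\!\int_{\mathbb R^n}a_m|\Delta\varphi_m|^2e^{c|x|^2}\Big)^{1/2}\longrightarrow0\quad(m\to\infty);
\]
on $(0,\varepsilon)$, using $|a_m-a|\le1$, the decay of $a_m\Delta\varphi_m$ at infinity, and the local boundedness of $a/a_m$ away from $\{a=0\}$, the contribution reduces (after a suitable joint choice of the small parameters) to something of the form $C\int_0^\varepsilon\!\int_{\mathbb R^n}|w|\,e^{-c|x|^2}$, which $\to0$ as $\varepsilon\to0$. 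Thus $\int_0^T\!\int_{\mathbb R^n}wg=0$ for every such $g$, and hence $w=0$ a.e.

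Finally, since the decay estimates are quantitatively useful only for $T$ small relative to $1/c$, one first carries out the above on $(0,T_1)$ with $T_1<\tfrac1{4c}$, obtaining $u=v$ a.e.\ on $\mathbb R^n\times(0,T_1)$ and hence $u(\cdot,T_1)=v(\cdot,T_1)$ in $L^\infty(\mathbb R^n)$ (here the hypothesis $u,v\in L^\infty(\mathbb R^n\times(\varepsilon,T))$ enters), and then repeats on $(T_1,T)$ with initial time $T_1$; since $u$ and $v$ are bounded there, the growth weight may be taken with an arbitrarily small constant, so the short-time restriction becomes vacuous and finitely many steps exhaust $(0,T)$. I remark that the duality method — rather than the $L^1$-contraction one would obtain from Kato's inequality $\partial_t|u-v|\le\Delta|\alpha(u)-\alpha(v)|$ — appears to be essential here precisely because the data are matched only weakly: weak convergence of $w(\cdot,t)$ to $0$ does not yield $|w(\cdot,t)|\to0$.
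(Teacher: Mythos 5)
Your overall strategy is the same as the paper's (Oleinik/Holmgren duality with a regularized adjoint problem, Gaussian barriers to kill the far-field contribution, and iteration in time to remove the restriction $T\lesssim 1/c$), but there is a genuine gap at exactly the point where the theorem's real difficulty sits: the contribution of the error term $\int_0^{\varepsilon}\!\int w\,(a_m-a)\Delta\varphi_m$ from times near $t=0$. On that strip $w=u-v$ is controlled only in $L^1_{loc}$ (uniformly in $t$, by uniform boundedness), not in $L^\infty$ or $L^2$, while $\Delta\varphi_m$ is controlled only in the degenerate weighted sense $\int a_m|\Delta\varphi_m|^2\le C$; pointwise one has merely $|\Delta\varphi_m|\le m\,|g-\partial_t\varphi_m|/1$, and $\partial_t\varphi_m$ is not bounded uniformly in $m$ (differentiating the equation in $t$ produces the term $(\partial_t a_m)\Delta\varphi_m$, which blows up with the mollification scale). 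So neither Cauchy--Schwarz against the energy estimate nor the claimed reduction to $C\int_0^\varepsilon\!\int|w|e^{-c|x|^2}$ goes through: an $L^1$ function times an $L^2$ (let alone degenerate-weighted $L^2$) function is not integrable, and ``local boundedness of $a/a_m$ away from $\{a=0\}$'' does not help because the set $\{a=0\}$ (the mushy zone, where $u,v\in[-1,1]$) is precisely where $\Delta\varphi_m$ can be of size $m$.

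The paper closes this gap with a second duality step that your proposal is missing, and which is where the specific structure of the Stefan nonlinearity enters. Below a small time $\delta$ one abandons $\phi_m$ and propagates its value at $t=\delta$ further down by the \emph{nondegenerate} backward heat equation $q_t+\Delta q=0$ (coefficient identically $1$). For $q_m$ the energy estimate is unweighted, $\int\!\int|\Delta q_m|^2\le C$, and the interior error becomes $\int\!\int\Delta q_m\,[\alpha(u)-\alpha(v)-(u-v)]$, whose bracket is bounded pointwise by $2$ because $\alpha(s)-s$ is bounded --- this is the hypothesis isolated in Theorem \ref{generalization} and it is essential. Cauchy--Schwarz then gives a bound $C\sqrt{|B(R)|}\sqrt{\delta}$ with no integrability of $w$ required, and the remaining time-slice term at $t=\gamma<\delta$ is handled by the weak-trace hypothesis together with the uniform $L^1_{loc}$ bound on $w(\cdot,t)$. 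Without this device (or an equivalent one exploiting the boundedness of $\alpha(s)-s$), the argument only proves uniqueness under the stronger hypothesis $\|u(\cdot,t)-v(\cdot,t)\|_{L^1_{loc}}\to0$, i.e.\ Bouillet's theorem rather than this one. A secondary, lesser issue: by working on all of $\mathbb R^n$ you are forced to require the weighted energy estimate $\int\!\int a_m|\Delta\varphi_m|^2e^{c|x|^2}\le C$ uniformly in $m$, which you correctly flag as the main analytic obstacle but do not prove; the paper sidesteps it entirely by posing the adjoint problem on a ball $B(R)$ with zero lateral data (so all energy quantities are unweighted and finite) and controlling the resulting boundary flux term via the barrier of Lemma \ref{normalderivative}, choosing $R$ in a set of radii on which the sphere integrals of $|\alpha(u)|+|\alpha(v)|$ are small.
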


The hypotheses of the theorem essentially say that $u=v$ initially in the sense of measures. 
The novelty of our result is that it allows for signed solutions.
Pierre \cite{P} demonstrated a similar uniqueness result for nonnegative solutions $u$  to 
equations of the 
form $u_t- \Delta \varphi(u)=0,$ where $\varphi$ is assumed to be nondecreasing, locally Lipschitz, and with 
$\varphi(0)=0$ and $u \in L^1(\mathbb R^n \times(0,T)) \cap  L^{\infty}(\mathbb R^n \times (\varepsilon, T)),$ for every $\varepsilon >0.$  
Bouillet \cite{B} considers solutions $u$  (possibly of changing 
sign)
of $u_t- \Delta \alpha (u) =0,$ which satisfy the same growth conditions as in our
theorem, with Lipschitz $\alpha(u)$, and  obtains uniqueness under the stronger hypothesis that
$\|u(\cdot,t)-v(\cdot,t)\|_{L^1_{loc}(\mathbb R^n)} \to 0,$ as $t \to 0,$  when $n \geq 2.$  For $n=1,$  
Bouillet has obtained the result above.  
Related to the two-phase Stefan problem is the porous medium equation $u_t=\Delta u^m,$ $m>1.$
This has been studied extensively by many authors, including uniqueness results for nonnegative solutions.

We do not know optimal conditions under which a solution $u$ belongs to  $ L^{\infty}(\mathbb R^n \times (\varepsilon, T))$ for 
every $\varepsilon >0.$
Korten and Moore \cite{KoM2} show that the assumption $u \in L^2_{loc}(\mathbb R^n \times (0,T))$ 
implies $\alpha(u)$ is continuous, 
hence $u$ is locally bounded.  For nonnegative solutions, Korten \cite{Ko} shows that if $u\in L^1_{loc}$ is a solution of \eqref{distsolns} then $\alpha(u)$ is
continuous, but this is an open question for solutions which change sign.

We  remark 
that on a fixed compact set $K \subset \mathbb R^n$, the functionals
$T_t(\varphi(x)) = \int_{K} (u(x,t)-v(x,t)) \varphi(x) dx,$  for $\varphi$ supported in $K$, have 
$\lim_{t \to 0} T_t(\varphi)= 0.$  In particular, $T_t(\varphi)$ is bounded for fixed $\varphi$, so by the principle of
uniform boundedness, $\|T_t\| =\|u(\cdot,t) - v(\cdot, t)\|_{L^1(K)}$ is uniformly bounded.

The paper is structured as follows: In section 2 we establish some technical lemmas
which will be used in the proof of Theorem 1.4, which we prove in section 3.  Section 4
contains some further discussion.
Throughout, the letter $C$ will denote a constant which may vary from line to line.

Marianne Korten is supported by grant \#0503914 from the National Science Foundation.

\section{A few technical estimates}\label{S:2.}

The estimates in this section involve elementary computations with the Gaussian kernel
and the maximum principle.  Throughout, $B(R)$ denotes the ball centered at $0$ of radius $R.$
Some of the ideas in this section are taken from the previously mentioned work of 
Bouillet \cite{B}.

\begin{lemma}\label{boundfunction}  Consider $[1,R] \subset \mathbb R$ and on $[1,R] \times [0, \infty)$ 
let $w(x,t)$ be the solution to  $w_t =\Delta w,$ with initial condition 
$w(x,0)=0,$ and boundary conditions $w(1,t)=1$  and $w(R,t)=0$ for all $t>0.$  Then
\item(i) $\frac{\partial w}{\partial t} >0$ for all $x \in (1,R)$, $t>0$
\item(ii) $ \left|\frac{\partial w}{\partial x} (R,t) \right| \leq \exp(\frac{-R^2}{8t})$ 
whenever $0 \leq t \leq T$ if $R$ is sufficiently large.

\end{lemma}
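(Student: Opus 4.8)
The plan is to solve the heat equation on the interval $[1,R]$ explicitly and extract the two stated properties from the structure of the solution. First I would reduce to homogeneous boundary data: let $\ell(x)$ be the linear function with $\ell(1)=1$, $\ell(R)=0$, i.e. $\ell(x)=\frac{R-x}{R-1}$, which is harmonic, and set $\tilde w = w - \ell$. Then $\tilde w$ solves $\tilde w_t = \Delta\tilde w$ with zero boundary data and initial data $\tilde w(x,0) = -\ell(x)$. Expanding $-\ell$ in the Dirichlet eigenfunctions $\sin\!\big(\frac{k\pi(x-1)}{R-1}\big)$, $k\ge 1$, gives
\begin{equation*}
w(x,t) = \ell(x) + \sum_{k=1}^\infty b_k \, e^{-\lambda_k t}\,\sin\!\Big(\tfrac{k\pi(x-1)}{R-1}\Big),\qquad \lambda_k = \Big(\tfrac{k\pi}{R-1}\Big)^2,
\end{equation*}
where $b_k = \frac{2}{R-1}\int_1^R (-\ell(x))\sin\!\big(\frac{k\pi(x-1)}{R-1}\big)\,dx$. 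A direct integration shows $b_k = -\frac{2}{k\pi}$ for every $k$ (the familiar Fourier sine coefficients of a linear function), so in particular every $b_k<0$.

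For part (i): differentiating term by term, $w_t(x,t) = \sum_k (-\lambda_k) b_k e^{-\lambda_k t}\sin(\cdots) = \sum_k \frac{2\lambda_k}{k\pi} e^{-\lambda_k t}\sin\!\big(\frac{k\pi(x-1)}{R-1}\big)$. This is the solution of the heat equation with zero boundary values whose initial datum is $-\Delta\ell + (\text{distributional boundary terms})$; more cleanly, note $v\coloneqq w_t$ itself solves the heat equation on $(1,R)$ with $v(1,t)=v(R,t)=0$ and, as $t\to 0^+$, $v$ concentrates as a positive combination of the boundary sources. Rather than argue via the (delicate) initial trace, I would instead invoke the strong maximum principle directly: $w_t$ solves the heat equation in $(1,R)\times(0,\infty)$, vanishes on the lateral boundary, and is not identically zero; since $w$ is increasing in $t$ in an averaged sense (the initial datum $0$ lies below the steady state $\ell\ge 0$, and by the comparison principle $w(x,t)\le w(x,s)$ for... actually $w$ increases to $\ell$), one gets $w_t\ge 0$, and then $w_t>0$ strictly in the open region by the strong maximum principle. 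Concretely: for $h>0$, $w(x,t+h)-w(x,t)$ solves the heat equation, is $\ge 0$ on the parabolic boundary (on $t=0$ it equals $w(x,h)\ge 0$ by the maximum principle since $w\ge0$; on the lateral sides it is $0$), hence $\ge 0$ inside; dividing by $h$ and letting $h\to0$ gives $w_t\ge0$, and strict positivity follows because $w_t\not\equiv0$.

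For part (ii): I would estimate $\big|\frac{\partial w}{\partial x}(R,t)\big|$ using the explicit series, or — cleaner and matching the "Gaussian kernel" remark in the section preamble — by a comparison/reflection argument. Extend the problem: $w$ is dominated by the solution $W$ on the half-line $(-\infty,R]$ (or on all of $\mathbb R$ after odd reflection about $x=R$) with boundary value $\le 1$ at the "source" located at distance $R-1\ge$ (a fixed amount) from $x=R$. The normal derivative at $x=R$ of such a solution is controlled by the heat kernel evaluated across the gap $R-1$: one gets a bound of the form $|\partial_x w(R,t)|\le \frac{C}{\sqrt t}\exp\!\big(-\frac{(R-1)^2}{4t}\big)$, and then for $R$ large and $0\le t\le T$ the polynomial factor $\frac{C}{\sqrt t}$ (which is itself dominated near $t=0$ and bounded for $t$ away from $0$ on $[0,T]$) together with replacing $(R-1)^2/4$ by $R^2/8$ absorbs the constant, yielding $\big|\frac{\partial w}{\partial x}(R,t)\big|\le \exp\!\big(-\frac{R^2}{8t}\big)$. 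I would carry this out by writing $w$ against the Dirichlet heat kernel on $(1,R)$, whose $x$-derivative at the endpoint $R$ decays like the Gaussian $\exp(-(R-1)^2/4t)$ uniformly.

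The main obstacle is part (ii): making the Gaussian decay constant come out to exactly $R^2/8$ (rather than, say, $(R-1)^2/4$ or $(R-1)^2/(4T)$) while handling the algebraic prefactor $t^{-1/2}$ and the $t$-dependence over the whole interval $[0,T]$. The trick is that for $R$ large relative to $T$, $(R-1)^2/(4t) - R^2/(8t) = \big(2(R-1)^2 - R^2\big)/(8t) = (R^2 - 4R + 2)/(8t)\to +\infty$, which dominates any $\log(C/\sqrt t)$ uniformly on $0<t\le T$; at $t$ bounded away from $0$ the whole expression is exponentially small in $R$ anyway. So the estimate is comfortable once $R$ is taken large enough depending on $T$, and that is exactly the hypothesis "if $R$ is sufficiently large."
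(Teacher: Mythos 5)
Your proposal is correct and follows essentially the same route as the paper: part (i) by the maximum principle applied to time differences of $w$ (your time-shift argument $w(x,t+h)-w(x,t)\ge 0$ is in fact a cleaner justification than the paper's direct appeal to positive ``initial values'' of $w_t$), and part (ii) by comparing $w$ with a reflected-Gaussian barrier on the half-line $(-\infty,R]$ and absorbing the $t^{-1/2}$ prefactor into the surplus $(R-1)^2/4-R^2/8>0$ in the exponent for large $R$. The one detail to pin down is that a half-line barrier with source amplitude $\le 1$ does not dominate $w$ at $x=1$ (it is only about $1/2$ there), so the barrier's initial datum must be boosted --- the paper takes it equal to $4$ on $(-\infty,1]$ so that its value at $x=1$ exceeds $1$ for $0<t<T$ --- after which your computation goes through verbatim.
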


\begin{proof} Observe that $\frac{\partial w}{\partial t}$ satisfies the heat equation with
boundary values $\frac{\partial w}{\partial t}(1,t)=\frac{\partial w}{\partial t}(R,t)=0$ for $t>0$
and initial values $\frac{\partial w}{\partial t}(x,0) >0.$ Then (i) follows immediately from the maximum principle. 
For (ii) consider the function $\tilde w(x,t)$ on $(-\infty, R] \times (0,\infty)$ 
which satisfies $\tilde w_t = \Delta \tilde w,$ $\tilde w(x,0) =4$ if 
$x \leq 1,$ $ \tilde w(x,0)=0$ if $1<x \leq R,$ and $\tilde w (R,t)= 0$ for all $t>0.$
This can be given explicitly as:
\begin{equation*}
\tilde w(x,t)= \frac{4}{\sqrt{4\pi t}} \int_{-\infty}^1 e^{\frac{-(x-s)^2}{4t}}- e^{\frac{-(x+s-2R)^2}{4t}} ds
\end{equation*}
Now 
\begin{equation*}
\tilde w(1,t)= \frac{4}{\sqrt{4\pi t}} \int_{-\infty}^1 e^{\frac{-(1-s)^2}{4t}}ds
-\frac{4}{\sqrt{4\pi t}} \int_{-\infty}^1 e^{\frac{-(1+s-2R)^2}{4t}} ds
= 2- \frac{4}{\sqrt{\pi}}\int_{-\infty}^{\frac{1-R}{\sqrt t}}  e^{-u^2}du. 
\end{equation*}
If $R$ is chosen sufficiently large, then for  $t <  T$ the last integral is bounded by $\frac{\sqrt{\pi}}{4}$.
Thus, for $t< T,$ $\tilde w (1,t) > 1.$  So by the maximum principle, 
$w(x,t) \leq \tilde w(x,t)$ for $1<x<R$ and $0<t< T .$  Since $w(R,t)=\tilde w(R,t)=0$
then $|\frac{\partial w}{\partial x}(R,t)| \leq |\frac{ \partial \tilde w}{\partial x}(R,t)|.$ Explicitly differentiating we find
\begin{equation*}
\left|\frac{\partial \tilde w}{\partial x}(R,t)\right| = 
\left|\frac{4}{\sqrt{4\pi t}} \int_{-\infty}^1 \frac{s-R}{t} e^{\frac{-(s-R)^2}{4t}} ds \right|
=\left|\frac{4}{\sqrt{\pi}} \int_{-\infty}^{\frac{1-R}{2\sqrt{t}}} \frac{2y}{\sqrt{t}} e^{-y^2} dy \right| 
= \left|\frac{4}{\sqrt{\pi t}} e^{\frac{-(1-R)^2}{4t}} \right|. 
\end{equation*}
If  $R$ sufficiently large, this last quantity is $
\leq \frac{4}{\sqrt{\pi}} e^{\frac{-R^2}{8t}} $ for all $0 < t <T.$

\end{proof}

\begin{lemma}\label{normalderivative} Suppose $0 \leq f \leq 1$ is supported in the ball $B(1) \subset \mathbb R^n,$  $d(x,t)$ is a 
smooth, bounded
function on $B(R) \times (0,T),$ and $\kappa \leq d(x,t) \leq 1$ for some $\kappa >0.$
Suppose $R>1$ is sufficiently large (as large as required for (ii) in the previous lemma)  and let $h(x,t)$ satisfy
\begin{equation*}
\begin{aligned}
d(x,t) \Delta h(x,t)&=\frac{\partial h}{\partial t}(x,t) &\hbox{ \ on\ } B(R) \times (0,T), \\
h(x,0)&=f(x), & x \in B(R),\\
h(x,t)&=0,  &  (x,t) \in \partial B(R) \times (0,T).
\end{aligned}
\end{equation*}
Then for $(y,t)$ with $|y|=R,$ $t\in (0,T),$
\begin{equation}\label{hbounded}
\left|\frac{\partial h}{\partial n}(y,t)\right| \leq  C\exp\left(\frac{-R^2}{8t}\right).
\end{equation}
Here $n$ denotes the outward normal to the ball $B(R)$ and $C$ is an absolute constant (in particular, it's independent of $R$, $\kappa$ and $d(x,t)$).
\end{lemma}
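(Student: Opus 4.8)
The plan is to bound the normal derivative of $h$ at $|y|=R$ by comparing $|h|$ against a radial supersolution built from the one-dimensional function $w$ of Lemma~\ref{boundfunction}, and then controlling the normal derivative of that supersolution. Since $0\le f\le 1$, the maximum principle applied to the operator $\partial_t - d(x,t)\Delta$ (which is uniformly parabolic because $\kappa\le d\le 1$) gives $0\le h\le 1$ on $B(R)\times(0,T)$; in particular $h$ vanishes on $\partial B(R)$, so $|\partial h/\partial n|(y,t)$ is just the magnitude of the inward-pointing radial derivative, and it suffices to dominate $h$ near the boundary by a comparison function $W$ with $W=h=0$ on $\partial B(R)$ and $|\partial W/\partial n|$ controlled by $\exp(-R^2/8t)$.

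First I would construct $W$. The natural candidate is $W(x,t)=w(|x|,t)$ extended appropriately, but the issue is that $w$ as defined lives on the annulus $[1,R]$ with boundary value $1$ at the inner radius, while here $h$ may equal $1$ (its maximum) somewhere inside $B(1)$. So I take $W(x,t)=w(\max(|x|,1),t)$ on $B(R)\times(0,T)$: on the inner ball $B(1)$ this is constant in space and equals $w(1,t)=1\ge h$ (trivially, since $h\le 1$), and on the annulus $B(R)\setminus B(1)$ it agrees with the radial extension of $w$. One checks $W\ge h$ on the parabolic boundary of the annulus: at $|x|=1$ we have $W=1\ge h$, at $|x|=R$ we have $W=0=h$, and at $t=0$ we have $W=0$ while... here is the subtlety: $h(x,0)=f(x)$ need not vanish on the annulus region, but $f$ is supported in $B(1)$, so $f\equiv 0$ on $B(R)\setminus B(1)$ and indeed $W=h=0$ there at $t=0$. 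Next I must check that $W$ is a supersolution for $\partial_t - d\Delta$ on the annulus. Since $w_t=\Delta w$ in one dimension and $W(x,t)=w(r,t)$ with $r=|x|$, we get $\Delta W = w_{rr} + \frac{n-1}{r} w_r$, so $\partial_t W - d\,\Delta W = w_t - d\big(w_{rr}+\tfrac{n-1}{r}w_r\big) = (1-d)w_{rr} - d\tfrac{n-1}{r}w_r$ using $w_t=w_{rr}$. By Lemma~\ref{boundfunction}(i), $w_t>0$, hence $w_{rr}>0$; and $w$ decreases from $1$ to $0$ as $r$ goes from $1$ to $R$, so $w_r\le 0$. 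With $0\le 1-d\le 1-\kappa$ and $d\ge\kappa>0$, both terms are $\ge 0$, so $\partial_t W - d\,\Delta W\ge 0$: $W$ is a supersolution. By the comparison principle, $h\le W$ on $B(R)\times(0,T)$.

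Finally, since $0\le h\le W$ with $h(y,t)=W(y,t)=0$ for $|y|=R$, the (inward) normal derivative satisfies $\big|\tfrac{\partial h}{\partial n}(y,t)\big| \le \big|\tfrac{\partial W}{\partial n}(y,t)\big| = \big|\tfrac{\partial w}{\partial x}(R,t)\big| \le \exp\!\big(\tfrac{-R^2}{8t}\big)$ by Lemma~\ref{boundfunction}(ii), valid for $R$ sufficiently large and $0<t<T$. Absorbing any dimensional factor from the chain rule $|\nabla W| = |w_r|$ (here it's exactly $1$, the radial direction coincides with the normal) into the absolute constant $C$ gives \eqref{hbounded}. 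The main obstacle is verifying that $W$ is genuinely a supersolution, i.e. extracting the sign information $w_{rr}>0$ and $w_r\le 0$ from Lemma~\ref{boundfunction}(i) and monotonicity of $w$ in $r$, and handling the gluing at $|x|=1$ where $W$ is only Lipschitz (not $C^1$) — but this is fine for the comparison principle since the kink is convex in the right direction, i.e. $W$ is a viscosity supersolution across $|x|=1$ because the radial derivative jumps upward (from $0$ inside to $w_r(1^+,t)\le 0$... one must check the jump has the correct sign, namely $W$ sits below its tangent from the annulus side), which is exactly what is needed.
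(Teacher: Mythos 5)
Your argument is correct in substance and rests on the same two pillars as the paper's proof --- comparison of $h$ with a barrier built from the one-dimensional $w$ of Lemma~\ref{boundfunction}, using $d\le 1$ together with $\Delta W=W_t\ge 0$ to get the supersolution inequality --- but the geometry of your barrier is genuinely different. You take the \emph{radial} function $W(x,t)=w(|x|,t)$ on the annulus $1<|x|<R$, whereas the paper first rotates so that the boundary point is $(R,0,\dots,0)$ and then uses the \emph{planar} function $W(x,t)=w(x_1,t)$ on the cap $D=B(R)\cap\{x_1>1\}$. The planar choice has the advantage that $\Delta W=w_{x_1x_1}=W_t$ exactly, so the supersolution property needs only $d\le 1$ and $\Delta W\ge 0$ (i.e.\ Lemma~\ref{boundfunction}(i)). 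Your radial choice produces the extra term $\frac{n-1}{r}w_r$ in $\Delta W$, and to absorb it you need $w_r\le 0$; you assert this from ``$w$ decreases from $1$ to $0$,'' which is not a proof (the boundary values alone do not force spatial monotonicity, and the naive argument via $v=w_x$ fails because of the corner singularity at $(1,0)$). The fact is true and can be established by a sliding comparison ($w(\cdot+h,t)\le w(\cdot,t)$ on $[1,R-h]$ by the maximum principle), so this is a fillable gap rather than a fatal one, but it is an extra ingredient the paper's slab barrier avoids entirely. Two smaller points: your gluing of $W\equiv 1$ on $B(1)$ to the annular piece is unnecessary --- the comparison need only be run on the annulus, whose inner parabolic boundary already satisfies $W=1\ge h$ --- and your description of the kink at $|x|=1$ is backwards: the radial derivative jumps \emph{down} (from $0$ to $w_r(1^+,t)\le 0$), giving a concave corner and hence a nonpositive singular part of $\Delta W$, which is indeed the correct sign for a supersolution.
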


\begin{proof}  Consider $(y,t)$ where $y$ is of the form $y=(R, 0,\dots,0).$ By rotation,
it suffices to show \eqref{hbounded} for such $(y,t).$ 

Set $D=B(R) \cap \{x \in \mathbb R^n : x_1 >1\}.$ On $D \times [0,\infty)$ set $W(x_1, \dots, x_n ,t)= w(x_1,t),$
where $w$ is as in Lemma \ref{boundfunction}. Then $\Delta W = W_t.$  By the maximum principle, $|h(x,t)| \leq 1$ on $B(R) \times (0,T)$, so in
particular, $W(x,t)\geq h(x,t)$ whenever $x_1 =1.$ On $(\partial B(R) \cap \{(x,t): x_1 >1 \}) \times (0,T),$
$W(x,t) \geq 0 =h(x,t).$  Consequently, $h(x,t) - W(x,t) \leq 0$ on
$\partial D \times (0,T).$ When $t=0,$ $x \in D,$ $W(x,t)=0=h(x,t).$ Thus, $h- W \leq 0$ on the parabolic boundary of $D \times (0,T).$   
Furthermore, on $D \times (0,T),$ since $d \leq 1,$ and $\Delta W \geq 0$ by  Lemma \ref{boundfunction}(i), then 
\begin{equation*}
d\Delta(h-W)=d\Delta h -d \Delta W  \geq d  \Delta h - \Delta W = h_t- W_t.
\end{equation*} 
Therefore, by the maximum principle, $h(x,t) \leq W(x,t)$ on $D \times (0,T).$ 
In particular,
for  $x=(x_1, 0,\dots, 0),$ $1<x_1<R,$ $h(x,t) \leq W(x,t).$  This combined with $0=h(R,0, \dots, 0,t) =W(R,0, \dots,0,t)$ gives
\begin{equation*}
\left|\frac{\partial h}{\partial n}(R,0,\dots,0,t)\right|=\left|\frac{\partial h}{\partial x_1}(R,0,\dots,0,t) \right| 
\leq \left|\frac{\partial W}{\partial x_1} (R,0,\dots,0,t)\right| =\left|\frac{\partial w}{\partial x}(R,t)\right| \leq 
C \exp\left(\frac{-R^2}{8 t}\right).
\end{equation*}

\end{proof}

\begin{lemma}\label{representingformula} Let $u$  be a solution of 
\eqref{distsolns} such that $u\in L^{\infty}(\mathbb R^n \times (\varepsilon, T))$ for every $\varepsilon >0.$
Then for a.e.  $0 \leq t_1 <t_2 <T,$ and a.e. $R>0$,
\begin{equation*}
\begin{aligned}
\int_{B(R)}u(x,t_2) \phi(x,t_2) dx&=\int_{B(R)}u(x,t_1) \phi(x,t_1) dx - \int_{t_1}^{t_2} \int_{\partial B(R)} \alpha(u) \frac{\partial 
\phi}{\partial n} d\sigma dt \\
& \qquad + \int_{t_1}^{t_2} \int_{B(R)} u \frac{\partial \phi}{\partial t}  + \alpha(u)\Delta \phi \, dxdt
\end{aligned}
\end{equation*}
for every  $\phi(x,t)\in C^2(B(R)\times (t_1,t_2)) \cap C(\overline{B(R)} \times [t_1,t_2])$  which vanishes on $\partial B(R) \times 
[t_1, 
t_2].$  Here $n$ is the outward normal to 
$\partial B(R).$ 
\end{lemma}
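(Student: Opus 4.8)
The plan is to derive this identity as a consequence of the distributional equation \eqref{distsolns}, by localizing in space to $B(R)$ and in time to $(t_1,t_2)$, using a cutoff argument combined with the interior local boundedness of $u$. First I would fix a test function $\phi$ as in the statement. The difficulty is that $\phi$ does not have compact support in $\mathbb R^n \times (0,T)$: it need not vanish as $t \to t_1$ or $t \to t_2$, and it is only $C^2$ up to the spatial boundary where it does vanish, but its normal derivative there is generally nonzero. So I would approximate by choosing a one-parameter family of temporal cutoffs $\chi_\delta(t)$ that are smooth, equal to $1$ on $[t_1+\delta, t_2-\delta]$, supported in $(t_1,t_2)$, and whose derivatives approximate $\mathbf 1_{t=t_1} - \mathbf 1_{t=t_2}$ (i.e. $\chi_\delta' \to$ point masses), together with a spatial cutoff that handles the boundary of $B(R)$. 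Since $u \in L^\infty(\mathbb R^n \times (\varepsilon,T))$ for every $\varepsilon>0$, the functions $u$ and $\alpha(u)$ are bounded on $B(2R) \times [t_1-\delta', T-\delta']$ for any $\delta'>0$ once $t_1>0$; when $t_1 = 0$ one must first pass to the limit $t_1 \to 0^+$ along a.e. sequence at the end.

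The main step: take a smooth spatial cutoff $\eta_\rho$ equal to $1$ on $B(R)$ and supported in $B(R+\rho)$, mollify $u$ in time and space to get smooth approximations $u^\epsilon$ which satisfy $u^\epsilon_t = \Delta \alpha(u)^\epsilon$ classically on the relevant compact set (standard for distributional solutions once one knows local $L^\infty$ bounds, via the mollified equation), multiply the equation by $\phi \,\chi_\delta$, and integrate by parts in $x$ over $B(R+\rho)$ and in $t$ over $(t_1,t_2)$. The integration by parts in $t$ produces $\int u^\epsilon \phi \chi_\delta'$, which in the limit $\delta \to 0$ gives the difference of the two ``time-slice'' integrals $\int_{B(R)} u(x,t_2)\phi(x,t_2) - \int_{B(R)} u(x,t_1)\phi(x,t_1)$ — here is where one needs $t_1, t_2$ to be Lebesgue points of $t \mapsto \int u(x,t)\phi(x,t)\,dx$, which is why the conclusion holds only for a.e. $t_1, t_2$. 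The double integration by parts in $x$ applied to $\alpha(u)^\epsilon \Delta(\phi\,\eta_\rho)$ produces the boundary term $-\int_{t_1}^{t_2}\int_{\partial B(R)} \alpha(u)\frac{\partial\phi}{\partial n}\,d\sigma\,dt$ (since $\phi$, hence $\phi\eta_\rho$, vanishes on $\partial B(R)$, only the $\frac{\partial\phi}{\partial n}$ term survives as $\rho \to 0$) plus the interior term $\int\int \alpha(u)\Delta\phi$; here ``a.e.\ $R$'' enters because one needs $R$ to be a Lebesgue point for the trace of $\alpha(u)$ on spheres, i.e.\ one needs $\alpha(u)$ restricted to $\partial B(r)$ to converge in $L^1$ as $r \to R$, which holds for a.e.\ $R$ by Fubini.

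The main obstacle, and the place requiring the most care, is justifying the passage to the boundary of $B(R)$: the identity \eqref{distsolns} only applies to test functions with compact support, so one genuinely has to let a spatial cutoff degenerate, and control the error term $\int\int \alpha(u)^\epsilon(\phi \Delta\eta_\rho + 2\nabla\phi\cdot\nabla\eta_\rho)$. Because $\phi$ vanishes on $\partial B(R)$ we have $|\phi| \le C\rho$ on the annulus $B(R+\rho)\setminus B(R)$, which kills the $\phi\Delta\eta_\rho$ term (bounded by $C\rho \cdot \rho^{-2} \cdot \rho = C$ times... — no: $|\Delta \eta_\rho| \le C\rho^{-2}$, the annulus has width $\rho$, and $|\phi| \le C\rho$, giving $O(1)$, which is not good enough), so one must instead integrate by parts once more to move a derivative off $\eta_\rho$, or argue directly that $\nabla\phi\cdot\nabla\eta_\rho$ concentrates on $\partial B(R)$ and yields exactly the claimed boundary integral while $\phi\Delta\eta_\rho$ integrates (after one integration by parts) to something that also converges to a boundary term combining with it; carefully organized, the two contributions assemble into $-\int\int_{\partial B(R)} \alpha(u)\frac{\partial\phi}{\partial n}$. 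I expect this boundary analysis, together with the bookkeeping of which $(t_1,t_2,R)$ are ``good'' points, to be the technical heart; everything else is a routine mollification-and-integrate-by-parts argument using the local boundedness hypothesis.
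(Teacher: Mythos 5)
Your overall skeleton matches the paper's: mollify $u$ and $\alpha(u)$ so that the equation holds pointwise on compact subsets, use Fubini to select the full-measure sets of times $t_1,t_2$ and radii $R$ at which a.e.\ point of the corresponding slice (respectively sphere) is a Lebesgue point of $u$ and $\alpha(u)$, integrate by parts, and pass to the limit in the mollification parameter. Where you diverge is at the technical heart, and there you leave a genuine gap. Your premise that ``the identity \eqref{distsolns} only applies to test functions with compact support, so one genuinely has to let a spatial cutoff degenerate'' is not correct: once $u_m=\varphi_m\ast u$ and $w_m=\varphi_m\ast\alpha(u)$ satisfy $\partial_t u_m=\Delta w_m$ \emph{classically} on a neighborhood of $\overline{B(R)}\times[t_1,t_2]$, one simply multiplies by $\phi$ and applies Green's identity on the ball; since $\phi=0$ on $\partial B(R)$, the term $\int_{\partial B(R)}\phi\,\partial_n w_m$ drops out and the boundary term $\int_{\partial B(R)} w_m\,\partial_n\phi$ appears exactly, with no cutoff and no error terms. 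This is what the paper does, and it makes the ``technical heart'' you anticipate disappear.

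By contrast, your cutoff route is not closed as written. You yourself compute that $\int \alpha(u)^\epsilon\,\phi\,\Delta\eta_\rho$ is only $O(1)$, and you end by asserting that, ``carefully organized, the two contributions assemble into'' the boundary integral without carrying this out; that assembly requires an additional integration by parts on the $\phi\,\Delta\eta_\rho$ term (to produce $+\int_{\partial B(R)}\alpha(u)\,\partial_n\phi$ cancelling half of the $-2\int_{\partial B(R)}\alpha(u)\,\partial_n\phi$ coming from $2\nabla\phi\cdot\nabla\eta_\rho$), and it forces you to confront the order of the limits $\rho\to0$ and $\epsilon\to0$, since the intermediate bounds involve $\nabla\alpha(u)^\epsilon$. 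There is also a basic set-up problem: $\phi$ is only defined on $\overline{B(R)}\times[t_1,t_2]$, so a cutoff $\eta_\rho$ supported in $B(R+\rho)$ makes $\phi\eta_\rho$ undefined; you would need either an interior cutoff (supported in $B(R)$) or a $C^2$ extension of $\phi$, neither of which you specify. None of this is fatal --- the argument can be completed --- but the boundary term, which is the whole point of the lemma, is not actually established in your proposal, and the paper's direct use of Green's identity on the mollified equation renders the entire cutoff apparatus unnecessary. Your temporal treatment (the cutoffs $\chi_\delta$ and the remark that $t_1,t_2$ must be chosen a.e.) is fine and equivalent to the paper's evaluation at Lebesgue times.
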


\begin{proof}
Let $\varphi(y,s)$ be a smooth radial function of compact support on $\mathbb R^{n+1}$ and for $m=1, 2, 3,   \dots$ set 
$\varphi_m(y,s)=m\varphi (my, ms).$   For $(x,t) \in \mathbb R^n \times (0,T)$ and 
$m$ 
sufficiently large (depending on $(x,t)),$ $\varphi_m (x-y, t-s)$ is a test function and thus
\begin{equation}\label{convolution}
\int_0^T \int_{\mathbb R^n} -u(y,s) \frac{\partial \varphi_m}{\partial t}(x-y, t-s) + \alpha(u(y,s)) \Delta \varphi_m(x-y, t-s) dy ds 
=0.
\end{equation}
Set $u_m= \varphi_m \ast u$ and $w_m= \varphi_m \ast \alpha(u).$  Then on compact subsets of $\mathbb R^n \times (0,T)$, if $m$ is 
sufficiently large, we can rewrite \eqref{convolution} as the pointwise equality $-\frac{\partial}{\partial t}u_m + \Delta w_m =0.$

Since $u$ and $\alpha(u)$ are locally integrable, then a.e.~ point of $\mathbb R^n \times (0,T)$ is a Lebesgue point of both functions.
By Fubini's theorem, for a.e.~ $t,$  $0<t<T,$ a.e.~ point of $\mathbb R^n \times\{t\}$ is a Lebesgue point of $u$ and $\alpha(u).$ 
Similarly, for a.e.~ $R$, a.e.~ point of $\partial B(R) \times (0,T)$ is a Lebesgue point of $u$ and $\alpha(u).$ Consider $0<t_1<t_2 
<T$, with the property that a.e.~ point of $\mathbb R^n \times \{t_1\}$ and $\mathbb R^n \times \{t_2\}$ is a Lebesgue point of $u$ and 
$\alpha(u)$, and consider $R>0$ with the property that a.e.~ point of $\partial B(R) \times (0,T)$ is a Lebesgue point of $u$ and 
$\alpha(u)$.  Then for sufficiently large $m,$ and with  $\phi$ as in the hypotheses, we have:
\begin{equation*}
\begin{aligned}
0&= \int_{t_1}^{t_2} \int_{B(R)} \phi \frac{\partial u_m}{\partial t}  - \phi \Delta w_m \,  dx dt \\
&= \int_{B(R)} u_m(x,t_2)\phi(x,t_2)dx -\int_{B(R)} u_m(x,t_1) \phi(x,t_1) dx -\int_{t_1}^{t_2} \int_{B(R)} u_m \frac{\partial 
\phi}{\partial t}  dx dt \\
& \qquad +\int_{t_1}^{t_2} \int_{\partial B(R)} \frac{\partial \phi}{\partial n} w_m d\sigma dt - \int_{t_1}^{t_2}\int_{B(R)} w_m \Delta 
\phi \, dx dt. 
\end{aligned}
\end{equation*}
Let $m \to \infty$ and rearrange to obtain the conclusion of the lemma.
\end{proof}

In all subsequent uses of this lemma, we will assume that the $t_1$ and $t_2$ as well as the $R$ are chosen in the sets of full measure
for which the above formula is valid.

\section{The proof of the Theorem}

Fix $\Theta(x) \in C^{\infty}(\mathbb R^n)$ of compact support.   Consider $t_0$ with
$t_0 < \min \{\frac{1}{8c}, T \}.$ 
We will show that for a.e. such $t_0,$ $\int_{\mathbb R^n} (u(x,t_0)-v(x,t_0)) \Theta(x) dx=0;$
that this is then true for a.e. $t_0,$ $0 <t_0 <  \min\{\frac{1}{8c}, T\}$
 and such $\Theta$ implies $u=v$ a.e.  on $\mathbb R^n \times (0,  \min\{\frac{1}{8c}, T\}).$ 
If $\frac{1}{8c} < T,$ then note that the hypotheses will now hold on 
$(\frac{1}{16c}, \min\{\frac{3}{16c},T \})$ and repeating the argument gives that $u=v$ a.e.
on $(0, \min \{\frac{3}{16c},T \}).$ Continuing, eventually we obtain $u=v$ a.e. on $\mathbb R^n \times (0,T).$

Without loss of generality we will assume that $\Theta$ is supported in $B(1)$, $\Theta \geq 0.$  Using the differential equation,
we will express $\int_{\mathbb R^n} (u(x,t_0)-v(x,t_0)) \Theta(x) dx$ in terms of several other quantities.  By appropriately choosing certain parameters, we will show
how to make each of these quantities as small as desired.  Some of the choices of  parameters will
depend on the choices of other parameters, that is, the order in which they are chosen is critical.  At the end of the proof, we explain
the exact order in which to choose these parameters.

We follow a technique of Oleinik \cite{O}.  Define
\begin{equation*}
c(x,t)=
\begin{cases} \frac{\alpha(u(x,t))-\alpha(v(x,t))}{u(x,t)-v(x,t)}  & \hbox{ \ if \  } u(x,t) \neq v(x,t)  \\
              0  & \hbox{ \ if \ } u(x,t)=v(x,t)
\end{cases}  
\end{equation*}
Then $0\leq c(x,t) \leq 1$ a.e. on $\mathbb R^n \times (0,T).$

Suppose $R$ is much larger than $1$, to be chosen precisely later.
For $m=1,2, \dots$ let $c_m(x,t)$ be a regularization of $c(x,t) \vee \frac{1}{m}$ which satisfies
\begin{equation*}
\lim_{m \to \infty} \int_0^T \int_{B(R)} \frac{|c-c_m|^2}{c_m} \, dx dt =0 .
\end{equation*}
For $m=1,2, \dots$ let $\phi_m$ be a classical solution of
\begin{equation*}
\begin{aligned}
\phi_t +c_m \Delta \phi &=0  &\hbox{ \ \ on \ } B(R) \times (0,t_0), \\
\phi(x, t_0) &=\Theta(x),  & x \in B(R), \\
\phi(x,t)&=0  &(x,t) \in \partial B(R) \times (0,t_0].
\end{aligned}
\end{equation*}
By the maximum principle, $\|\phi_m\|_{\infty} \leq \|\Theta\|_{\infty}.$
Suppose $0 < \delta <t_0,$ where $\delta$ is to be chosen later.  
Then by Lemma \ref{representingformula}, we may write
\begin{equation*}
\begin{aligned}
&\int_{B(R)} (u(x,t_0)-v(x,t_0)) \Theta(x) dx =\int_{B(R)} (u(x, \delta) - v(x, \delta)) \phi_m(x, \delta) dx \\
& - \int_{\partial B(R)}\int_{\delta}^{t_0} \frac{\partial \phi_m}{\partial n} \left[ \alpha(u)-\alpha(v) \right] dt d\sigma 
+ \int_{B(R)}\int_{\delta}^{t_0}  \Delta\phi_m \left[ \alpha(u)- \alpha(v) \right] 
+ \frac{\partial \phi_m} {\partial t} \left[u-v \right] dt dx \\
&= I + II + III,
\end{aligned}
\end{equation*}
each of which we investigate separately.

To estimate III, we use $-c_m \Delta \phi_m =  \frac{\partial}{\partial t} \phi_m$ to obtain
\begin{equation*}
\begin{aligned}
|III|&=\left| \int_{B(R)} \int_{\delta}^{t_0}  \Delta \phi_m \left[\alpha(u) - \alpha(v) \right] 
  -c_m  \Delta \phi_m \left[ u-v \right] dt dx \right| \\
&= \left| \int_{B(R)} \int_{\delta}^{t_0}  \Delta \phi_m  c \left[u-v \right]  -c_m  \Delta \phi_m \left[ u-v \right] dt dx \right| \\
&\leq \|u-v\|_{L^{\infty}(B(R) \times (\delta, t_0])}  \int_{B(R)} \int_\delta^{t_0}  | \Delta \phi_m| |c-c_m| dt dx  \\
& \leq \|u-v\|_{L^{\infty}(B(R) \times (\delta, t_0])} \left( \int_{B(R)} \int_{\delta }^{t_0}  c_m |\Delta \phi_m|^2 dt dx \right)^{\frac{1}{2}} \left(  \int_{B(R)} \int_{\delta}^{t_0} \frac{|c-c_m|^2}{c_m} 
dt dx \right)^{\frac{1}{2}}.
\end{aligned}
\end{equation*}
Multiply the equation $\frac{\partial}{\partial t} \phi_m + c_m \Delta \phi_m=0$ by $\Delta \phi_m$ and integrate by parts to obtain
\begin{equation}\label{energyest}
\frac{1}{2} \int_{B(R)} |\nabla \phi_m(x, \delta)|^2 dx + \int_{\delta}^{t_0} \int_{B(R)} c_m |\Delta \phi_m|^2 dx dt  
                            =\frac{1}{2}\int_{B(R)} |\nabla \Theta(x, t_0)|^2 dx 
\end{equation}
and thus $III \to 0 $ as $m \to \infty$ (for $R$ and $\delta$  fixed).   

To estimate $II$, we use Lemma \ref{normalderivative}, with $t$ replaced by $t_0-t$:  
\begin{equation*}
\begin{aligned}
II &\leq \int_{\partial B(R)} \int_{\delta}^{t_0}  \left|\frac{\partial\phi_m(x,t)}{\partial n} \right| \left(|\alpha(u(x,t))|+|\alpha(v(x,t))|\right) dt d\sigma \\
   &\leq \int_{\partial B(R)} \int_{\delta}^{t_0}\exp\left(\frac{-R^2}{8(t_0-t)}\right) \exp(c|x|^2)\left(|\alpha(u)|+|\alpha(v)|\right) \exp(-c|x^2|) dt d\sigma \\
   &\leq \exp\left(cR^2- \frac{R^2}{8(t_0-t)}\right)\int_{\partial B(R)}\int_0^T \left(|\alpha(u)|+|\alpha(v)|\right) 
\exp(-c|x^2|) dt d\sigma
\end{aligned}
\end{equation*} 
Since $\int_{\mathbb R^n} \int_0^T  \left(|\alpha(u)|+|\alpha(v)|\right) \exp(-c|x^2|) dt dx = M < \infty,$
there exists a set $G \subset [0, \infty)$ with the property that $G \cap [L, \infty)$ has positive measure for any
$L>0$, such that for $R\in G,$  
\begin{equation*}
\int_{\partial B(R) } \int_0^T  \left(|\alpha(u)|+|\alpha(v)|\right) \exp(-c|x^2|) dt d\sigma  \leq \frac{M}{R^n}.
\end{equation*}
Furthermore, $\exp\left(cR^2- \frac{R^2}{8(t_0-t)}\right) \to 0$ as $R \to \infty$ since $t_0 < \frac{1}{8c},$ and hence, $t_0-t <
\frac{1}{8c}.$ Therefore, we may choose $R$ sufficiently large (in $G$), and independent of $\delta$ and $m$ so that $II$ is as small 
as 
desired.

We would now like to estimate  $I=\int_{B(R)} (u(x, \delta)-v(x, \delta)) \phi_m(x,\delta) dx.$  
We use a variation of the strategy so far.  Let $q_m(x,t)$ be the solution to
\begin{equation*}
\begin{aligned}
q_t + \Delta q &=0  & \hbox{ \ on \ } B(R) \times (-\infty, \delta), \\
q(x,\delta)&= \phi_m(x, \delta) ,  & x \in B(R), \\
q(x,t)&=0  &\hbox{\ on \ } \partial B(R) \times (-\infty, \delta).
\end{aligned}
\end{equation*} 
Let $0 < \gamma < \delta.$ Then by Lemma \ref{representingformula},
\begin{equation*}
\begin{aligned}
&\int_{B(R)} (u(x,\delta) -v(x,\delta)) \phi_m(x,\delta) dx = \int_{B(R)}\left( u(x,\gamma)-v(x,\gamma)\right) q_m(x,\gamma) dx \\
& \ - \int_{\partial B(R)} \int_{\gamma}^{\delta} \frac{\partial q_m}{\partial n}[\alpha(u)-\alpha(v)]dt d\sigma
+ \int_{B(R)} \int_{\gamma}^{\delta} \Delta q_m [\alpha(u)-\alpha(v)]+ \frac{\partial q_m}{\partial t}[u-v] dtdx \\
&= I_1 + I_2 + I_3.
\end{aligned}
\end{equation*}
We first estimate $I_3$ in a similar fashion to our estimation of $III$. Since $\frac{\partial q_m}{\partial t} =- \Delta q_m,$
\begin{equation*}
\begin{aligned}
|I_3| &= \left| \int_{B(R)} \int_{\gamma}^{\delta} \Delta q_m \left[ \alpha(u) - \alpha(v) - (u-v)\right] 
dtdx \right| 
\leq 2 \int_{B(R)} \int^{\delta}_{\gamma} |\Delta q_m| dtdx \\
     & \leq 2 \left(\int_{B(R)} \int_{\gamma}^{\delta} 1 dt dx \right)^{\frac{1}{2}} \left( \int_{B(R)} \int_{\gamma}^{\delta} |\Delta 
q_m|^2 dt dx\right)^{\frac{1}{2}} 
           = 2 \sqrt{|B(R)|} \sqrt{\delta} \left(\int_{B(R)} \int_{\gamma}^{\delta} |\Delta q_m|^2 dt dx \right)^{\frac{1}{2}}.
\end{aligned}
\end{equation*}
Similar to \eqref{energyest},
multiply the equation $0=\frac{\partial q_m}{\partial t}  + \Delta q_m $ by $\Delta q_m$ and integrate by parts to obtain
\begin{equation*}
\begin{aligned}
&\frac{1}{2} \int_{B(R)} |\nabla q_m (x,\gamma) |^2 dx + \int_{B(R)}\int_{\gamma}^{\delta} |\Delta q_m|^2 dx dt = \frac{1}{2} \int_{B(R)} |\nabla q_m (x, \delta )|^2 dx  \\
&=\frac{1}{2} \int_{B(R)} |\nabla \phi_m (x, \delta)|^2 dx \leq \frac{1}{2} \int_{B(R)}|\nabla \Theta(x,t_0)|^2 dx
\end{aligned}
\end{equation*}
where we have used \eqref{energyest}.  
Thus, $|I_3| \leq C \sqrt{|B(R)|} \sqrt{\delta}.$

Our estimation of  $I_2$ is similar to that of $II$. 
We claim $|\frac{\partial q_m (x,t)}{\partial n}| \leq C\exp(\frac{-R^2}{8(t_0-t)})$ for $(x,t) \in \partial B(R)\times (0, \delta).$
This follows from a variation of Lemma \ref{normalderivative}. Consider that lemma and its proof.  Let $h$ be a solution of the equation
as in the statement of the lemma; then as concluded there, $|\frac{\partial h}{\partial n}(x,t)| \leq C \exp(\frac{-R^2}{8t})$ for 
$|x|=R$ and  
$0<t<T.$ Suppose however, $0<T_1<T$ is fixed, and define $r(x,t)$  as the solution of $\Delta r=r_t$ on $B(R) \times (T_1,T),$
$r(x,t)=0$ on $\partial B(R) \times [T_1, T),$ and $r(x,T_1)=h(x, T_1),$ for $x \in B(R).$  Then, following the proof of Lemma 
\ref{normalderivative} (with the notation there) we have that $h(x,t) \leq W(x,t)$ on $D \times (0,T),$ so in particular, 
$r(x,T_1)= h(x,T_1) \leq 
W(x,T_1)$ for $x \in D.$  Furthermore, reasoning exactly as before, $r(x,t) \leq W(x,t)$ on $\partial D \times (T_1,T).$ So by the 
maximum 
principle, $r(x,t) \leq W(x,t)$ on $D \times (T_1,T).$ Continuing as in the proof of Lemma \ref{normalderivative} we conclude
$|\frac{\partial r(R,0,\dots,0,t)}{\partial n}| \leq C\exp(\frac{-R^2}{8t}),$ 
and hence by rotation, $|\frac{\partial r(x,t)}{\partial n}| \leq C\exp(\frac{-R^2}{8t})$ for all $(x,t) \in \partial B(R) \times 
(T_1,T).$
Applying this to $q_m(x,t_0-t)$ yields the desired estimate 
$|\frac{\partial q_m (x,t)}{\partial n}| \leq C\exp(\frac{-R^2}{8(t_0 -t)})$ for $(x,t) \in \partial B(R)\times (0, \delta).$

With this estimate in hand, the estimation of $I_2$  follows exactly the same steps as the estimation of $II.$ Consequently, we may 
choose $R$ sufficiently large, independent of $\delta,$ $\gamma$ and $m,$ so that $|I_2|$ is as small as desired.

We finally estimate $I_1.$
\begin{equation*}
\begin{aligned}
&\int_{B(R)} (u(x, \gamma) -v(x, \gamma)) q_m(x,\gamma) dx =   \\
&\int_{B(R)} (u(x, \gamma) -v(x, \gamma)) (q_m(x,\gamma)-q_m(x,0)) dx + \int_{B(R)} (u(x, \gamma) -v(x, \gamma)) q_m(x,0)  dx. 
\end{aligned}
\end{equation*}
If $\gamma$ is small, the next to last integral is small since $\|u( \cdot,t)- v(\cdot, t) \|_{L^1(B(R))}$ are bounded (see the remarks 
after the statement of the theorem)
and the fact that $q_m(x,t) 
\to q_m(x,0)$ uniformly as $t \to 0.$ 
The last integral is small if $\gamma$ is small by hypothesis.

Let us explain the order in which various constants are chosen. First $R$ should be chosen so that $II$ and $I_2$ are small.
Then $\delta$ should be chosen so that $I_3$ is small. Then $m$ should be chosen so that $III$ is small. 
Then $\gamma$ should be chosen so that $I_1$ is small.
This forces $\int_{B(R)} (u(x,t_0)-v(x,t_0)) \Theta(x) dx$ to be as small as desired, which completes the 
proof.

\section{Further remarks}
  
With only slight modification, the proof of Theorem \ref{maintheorem} can yield a more general statement. 

\begin{theorem}\label{generalization}
Suppose $\alpha : \mathbb R \to \mathbb R$ is nondecreasing and Lipschitz, and that there exists a number $a \geq 0$ so that
$\alpha(u)-au$ is bounded. 
Suppose $u$ and $v$ are  solutions of
\eqref{distsolns}   on $\mathbb R^n \times (0,T)$ which
belong to $L^{\infty}(\mathbb R^n \times (\varepsilon, T))$ for 
every $\varepsilon >0,$ and which satisfy 
 $\int_{\mathbb R^n} \int_0^T (|u(x,t)| + |v(x,t)|) e^{-c|x|^2} dt dx < \infty,$
for some $c>0.$  
If  for every $\varphi \in C^{\infty}(\mathbb R^n)$  of compact 
support we have
$\lim_{t \to 0} \int_{\mathbb R^n} (u(x,t)-v(x,t)) \varphi(x) dx =0,$
then $u(x,t)=v(x,t)$ a.e. on $\mathbb R^n \times (0,T).$
\end{theorem}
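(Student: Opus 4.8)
The plan is to re-run the proof of Theorem \ref{maintheorem}, tracking the places where the explicit Stefan $\alpha$ was used and supplying substitutes. Let $L$ be the Lipschitz constant of $\alpha$ and put $K=\sup_{s}|\alpha(s)-as|$; note that necessarily $0\le a\le L$ (compare $\alpha(s)-\alpha(0)$ with $as$ and let $|s|\to\infty$). First, the coefficient $c(x,t)=(\alpha(u)-\alpha(v))/(u-v)$, which before satisfied $0\le c\le1$, now satisfies $0\le c(x,t)\le L$ by monotonicity and the Lipschitz bound. Second, the finiteness $\int_{\mathbb R^n}\int_0^T(|\alpha(u)|+|\alpha(v)|)e^{-c|x|^2}\,dt\,dx<\infty$, which came from $|\alpha(s)|\le|s|+1$, now follows from $|\alpha(s)|\le a|s|+K$ together with $\int_{\mathbb R^n}\int_0^T e^{-c|x|^2}\,dt\,dx<\infty$. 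Third, the estimate of $I_3$ used $|\alpha(u)-\alpha(v)-(u-v)|\le2$, from $|\alpha(s)-s|\le1$; its replacement is $|\alpha(u)-\alpha(v)-a(u-v)|=|(\alpha(u)-au)-(\alpha(v)-av)|\le2K$, so (when $a>0$) the heat operator $\partial_t+\Delta$ defining $q_m$ is replaced by $\partial_t+a\Delta$, with energy identity $a\int_\gamma^\delta\int_{B(R)}|\Delta q_m|^2\,dx\,dt\le\frac12\int_{B(R)}|\nabla\Theta|^2\,dx$, still yielding $|I_3|\le C\sqrt{|B(R)|\,\delta}$, which is small for $\delta$ small. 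The estimate of $III$ and the energy identity \eqref{energyest} are unaffected since they only use $\alpha(u)-\alpha(v)=c(u-v)$ and $c_m\ge0$.

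The one structural change is in the Gaussian boundary estimates. Lemmas \ref{boundfunction}--\ref{normalderivative} assumed a coefficient $d$ with $\kappa\le d\le1$, whereas $c_m$ (a regularization of $c\vee\tfrac1m$) now lies in $[\tfrac1m,L]$. Writing $c_m=L\,d_m$ with $d_m\in[\tfrac1{mL},1]$ and rescaling time by $t\mapsto Lt$ reduces the equation for $\phi_m$ to one governed by $d_m$, to which Lemma \ref{normalderivative} applies; equivalently, one repeats the proof of Lemma \ref{normalderivative} using the barrier built from the one--dimensional heat equation of diffusivity $L$, the comparison $c_m\Delta(h-W)\ge h_t-W_t$ surviving because it used only $c_m\le L$ and $\Delta W\ge0$. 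The outcome is $|\partial\phi_m/\partial n(x,t)|\le C\exp(-R^2/(8L(t_0-t)))$, and likewise $|\partial q_m/\partial n(x,t)|\le C\exp(-R^2/(8a(t_0-t)))$ for $a>0$.

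With these bounds the estimates of $II$ and $I_2$ proceed as before, the controlling factor now being $\exp\!\big(cR^2-\tfrac{R^2}{8L(t_0-t)}\big)$ (respectively with $a$ in place of $L$), which tends to $0$ as $R\to\infty$ through $G$ exactly when $t_0-t<\tfrac1{8Lc}$ (resp.\ $<\tfrac1{8ac}$, which is implied since $a\le L$). Hence one proves $\int_{\mathbb R^n}(u(x,t_0)-v(x,t_0))\Theta(x)\,dx=0$ for a.e.\ $t_0<\min\{\tfrac1{8Lc},T\}$, deduces $u=v$ a.e.\ on $\mathbb R^n\times(0,\min\{\tfrac1{8Lc},T\})$, and iterates the time--translation argument to reach $(0,T)$. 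The choice of $c_m$ with $\int_0^T\int_{B(R)}|c-c_m|^2/c_m\to0$, the estimate of $I_1$, and the order in which $R,\delta,m,\gamma$ are chosen carry over verbatim.

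If $a=0$ then $\alpha$ is bounded and $\partial_t+a\Delta$ degenerates; here one drops $q_m$ and estimates $I=\int_{B(R)}(u(x,\delta)-v(x,\delta))\phi_m(x,\delta)\,dx$ directly. Since $\partial_t\phi_m+c_m\Delta\phi_m=0$ is uniformly parabolic with smooth coefficient $\ge\tfrac1m$, $\phi_m$ extends continuously to $\overline{B(R)}\times[0,t_0]$, so $\phi_m(\cdot,\delta)\to\phi_m(\cdot,0)$ uniformly as $\delta\to0$; splitting $I=\int(u-v)(\phi_m(\cdot,\delta)-\phi_m(\cdot,0))+\int(u-v)\phi_m(\cdot,0)$, the first term $\to0$ by the uniform bound on $\|u(\cdot,\delta)-v(\cdot,\delta)\|_{L^1(B(R))}$ noted after Theorem \ref{maintheorem}, and the second $\to0$ by hypothesis (extended from $C^\infty$ functions of compact support to continuous ones of compact support via the same $L^1$ bound). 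The steps I expect to need the most care are the rescaled version of Lemma \ref{normalderivative} and checking that this degenerate case still fits the parameter ordering, rather than any genuinely new idea.
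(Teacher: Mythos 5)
Your overall strategy coincides with what the paper intends: the paper gives no written proof of Theorem \ref{generalization}, only the remark that the proof of Theorem \ref{maintheorem} adapts with slight modification, and for $a>0$ your list of modifications is correct and complete. In particular: $0\le c\le L$; the barrier of Lemmas \ref{boundfunction}--\ref{normalderivative} run at diffusivity $L$, the comparison $c_m\Delta(h-W)\ge h_t-W_t$ surviving because it uses only $c_m\le L$ and $\Delta W\ge 0$; the resulting threshold $t_0<\frac{1}{8Lc}$ and the iteration in time; the integrability of $\alpha(u),\alpha(v)$ from $|\alpha(s)|\le a|s|+K$; and the replacement of $\partial_t+\Delta$ by $\partial_t+a\Delta$ in the definition of $q_m$, so that $I_3$ involves $(\alpha(u)-au)-(\alpha(v)-av)$, bounded by $2K$, with the energy identity supplying $\bigl(\int\int|\Delta q_m|^2\bigr)^{1/2}\le C/\sqrt{a}$. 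One small slip: the supersolution available to bound $q_m$ is the same diffusivity-$L$ barrier $W$ that already dominates $\phi_m(\cdot,\delta)$ (it remains a supersolution for the $a$-operator since $a\le L$ and $\Delta W\ge0$), so the bound you actually obtain is $C\exp(-R^2/(8L(t_0-t)))$ rather than $C\exp(-R^2/(8a(t_0-t)))$; this is harmless since the $L$-bound is what dictates the choice of $t_0$.

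The genuine gap is the case $a=0$. You propose to discard $q_m$ and estimate $I=\int_{B(R)}(u-v)(x,\delta)\phi_m(x,\delta)\,dx$ directly by letting $\delta\to0$, using continuity of $\phi_m$ up to $t=0$ and the initial-trace hypothesis applied to $\phi_m(\cdot,0)$. But $\phi_m(\cdot,0)$, and the rate at which $\phi_m(\cdot,\delta)\to\phi_m(\cdot,0)$, depend on $m$, so both halves of your splitting force $\delta$ to be chosen \emph{after} $m$. That ordering is incompatible with the estimate of $III$, which carries the factor $\|u-v\|_{L^\infty(B(R)\times(\delta,t_0])}$; since $u$ and $v$ are only assumed bounded on $\mathbb R^n\times(\varepsilon,T)$ for each $\varepsilon>0$, this factor may blow up as $\delta\to0$, so $m$ must be chosen \emph{after} $\delta$. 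This circularity is precisely what the two-stage construction (the auxiliary $q_m$ and the fourth parameter $\gamma$, chosen last) was designed to break: on $(\gamma,\delta)$ the error is controlled by the boundedness of $\alpha(s)-as$ rather than by $\|u-v\|_{L^\infty}$, which is not available near $t=0$. As written your $a=0$ argument does not close; you would need either estimates on $\phi_m(\cdot,\delta)-\phi_m(\cdot,0)$ and on the family $\{\phi_m(\cdot,0)\}_m$ that are uniform in $m$ (not supplied, and not obvious since the equation for $\phi_m$ degenerates as $m\to\infty$), or some substitute for $q_m$ when $a=0$. The rest of the argument (estimates of $II$, $I_1$, $I_2$, the choice of $c_m$, and the parameter ordering for $a>0$) carries over as you say.
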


Neither Theorems \ref{maintheorem} or \ref{generalization}  allow for $\alpha$ which are only locally Lipschitz  such as $\alpha(u) = 
sgn(u) |u|^m$ in the porous
medium equation.   B\'enilan, Crandall and Pierre \cite {BCP} have shown uniqueness, in the sense of distributions,  
for solutions $u \in C([0,T): L^1_{loc}(\mathbb R^n))$
of the porous medium equation which satisfy a certain growth condition.  But for the porous medium equation, uniqueness for signed solutions with the
initial data taken as measures, remains an open problem.  See Daskalopolous and Kenig \cite{DK} for a discussion of uniqueness results for the porous medium equation and
this open question.

An interesting first step toward  more general uniqueness results would be to show the main theorem in the case when $\alpha$ is only assumed Lipschitz (or even locally
Lipschitz) and nondecreasing. (With possibly different growth conditions on the solutions.)  These are the assumptions in the theorem of Pierre \cite{P} on uniqueness for nonnegative solutions, and the 
theorem of Bouillet \cite{B} on uniqueness for signed solutions
which assumes $L^1$ convergence as $t \to 0.$


\begin{thebibliography}{1000} 


\bibitem[1]{AKo} D.~ Andreucci and M.~ K.~ Korten, {\it Initial traces 
of solutions
to a one-phase Stefan problem in an infinite strip,} Rev.~ Mat.~ 
Ibero\-a\-me\-ri\-cana  9 (2) (1993), 315-332.

\bibitem[2]{B} J.~ E.~ Bouillet, {\it Signed solutions to diffusion-heat conduction equations,} in Free boundary problems: theory and 
applications, Vol. II (Irsee, 1987), 
Pitman Res. Notes Math. Ser., 186, Longman Sci. Tech., Harlow, 1990, 480-485.
 
\bibitem[3] {BCP} Ph. B\'enilan, M.~G.~ Crandall, and M. Pierre, { Solutions of the porous medium equation under optimal conditions on 
the initial values},
Indiana Univ. Maht. J. 33 (1984), 51-87.

\bibitem[4]{CE} L.~ A.~ Caffarelli and L.~ C.~ Evans, {\it Continuity of the temperature in the two-phase Stefan problem,}  Arch. 
Rational Mech. Anal. 81 (1983), 199-220.
 

\bibitem[5]{DK} P.~ Daskalopoulos and C.~E.~ Kenig, Degenerate Diffusions, European Mathematical Society, Z\"urich, 2007.

\bibitem[6]{D} E.~ DiBenedetto, {\it Continuity of weak solutions to 
certain singular parabolic equations,} 
Ann. Mat. Pura Appl. (IV) 130 (1982), 131--176. 


\bibitem[7]{Ko} M.~ K.~ Korten, {\it Non-negative solutions of $u_t = 
\Delta
(u-1)_+$: Regularity and uniqueness for the Cauchy problem,} Nonl.~ Anal., Th., Meth.~ and Appl. 27 (5) (1996), 589-603. 

\bibitem[8]{KoM1} M.~K.~ Korten and C.~N.~Moore, {\it The Cauchy 
problem 
for the two phase Stefan problem}, Commun. Appl. Anal. 
11, no. 1, (2007), 43-52.

\bibitem[9]{KoM2}   M.~K.~ Korten and C.~N.~Moore, {\it Regularity for 
solutions of the two-phase Stefan problem}, Commun. Pure  
Appl.
Anal. 7, no. 3 (2008), 591-600. 


\bibitem[10]{O} O.~A.~ Oleinik, {\it A method of solution of the general Stefan problem}, Soviet Math. Doklady 1 (1960), 1350-1353.

\bibitem[11]{P} M.~ Pierre, {\it Uniqueness of the solutions of $u_t- \Delta \varphi(u)=0$ with initial datum a measure}, Nonlinear 
Analysis, Theory, Methods and Applications 6, no.~ 2 (1982), 175-187.

\bibitem[12]{Sa} P.~E.~ Sacks, {\it Continuity of solutions of a singular 
parabolic equation,} 
Nonlinear Anal.~ 7 (4) (1983), 387--409.

\bibitem[13]{Z} W.~P.~ Ziemer, {\it Interior and boundary continuity of weak solutions of degenerate parabolic equations,} Trans.~ 
Amer.~ Math.~ Soc. 271 (2) (1982), 733--748.

\end{thebibliography}
\end{document}